
%
%
\documentclass[a4paper,12pt,reqno]{amsart}
\usepackage{tikz}
\usetikzlibrary{arrows,shapes}
\usepackage{amsmath}%
\usepackage{amsfonts}%
\usepackage{amssymb}%
\usepackage{graphicx}
\usepackage{float} 
\usepackage{subfigure} 
\usepackage{lineno,hyperref}
\hypersetup{
	colorlinks = true,
	linkcolor = blue,
	anchorcolor = blue,
	citecolor = blue,
	filecolor = blue,
	urlcolor = blue
}
%
\newtheorem{thm}{Theorem}
\theoremstyle{plain}

\newtheorem{coro}{Corollary}

\newtheorem{lemm}{Lemma}

\numberwithin{equation}{section}

\newcommand{\C}{\mathbb{C}}


\begin{document}

\title[Hayman conjecture]{A note on Hayman's problem}
\author{Jiaxing Huang $^1$}
\address{$^1$College of Mathematics and Statistics, Shenzhen University, Guangdong, 518060, P. R. China}
\address{$^2$Institute of Mathematics, Academy of Mathematics and Systems Sciences,
	Chinese Academy of Sciences, Beijing 100190, China}
\email{wangyf@math.ac.cn}
\author{Yuefei Wang $^{1, 2}$}
\keywords{Hayman's prolem, differential polynomials, meromorphic functions, zero distributions}%
\thanks{Corresponding author: Yuefei Wang.}

\begin{abstract} In this note, it is shown that the differential polynomial of the form $Q(f)^{(k)}-p$ has infinitely many zeros, and particularly  $Q(f)^{(k)}$ has infinitely many fixed points for any positive integer $k$, where $f$ is a transcendental meromorphic function, $p$ is a nonzero polynomial and $Q$ is a polynomial with coefficients in the field of small functions of $f$. The results are traced back to Problem 1.19 and Problem 1.20 in the book of research problems by Hayman and Lingham \cite{HL21}. As a consequence, we give an affirmative answer to an extended problem on the zero distribution of $(f^n)'-p$, proposed by Chiang and considered by Bergweiler \cite{Ber97}. Moreover, our methods provide a unified way to study the problem of the zero distributions of partial differential polynomials of meromorphic functions in one and several complex variables with small meromorphic coefficients.
\end{abstract}
\maketitle

\section{Introduction and main results}
In this paper, we focus on the zero distributions of (partial) differential polynomials in a meromorphic function $f$ with small meromorphic coefficients.  We assume that the reader is familiar with the standard notations and some basic results in Nevanlinna theory  (see \cite{Hay64,IIpo11}).

In 1959, Hayman \cite{Hay59, HL21} conjectured that if $f$ is a transcendental meromorphic function and $n\geq 2$ is an integer, then $(f^n)'$ assumes every nonzero complex number infinitely often. He proved this conjecture for $n\geq 4$. The case for $n=3$ was settled by Mues \cite{Mue79} in 1979, and the remaining for $n=2$ was obtained independently by Bergweiler and Eremenko \cite{BE95}, Chen and Fang \cite{CF95} and Pang and Zalcman \cite{PZ99}. One principal extension was studied in \cite{BE95}, the authors showed the Hayman conjecture is valid if $(f^n)'$ is replaced by $(f^n)^{(k)}$ for $n> k\geq 1$.

Two related questions arising in connections with Hayman's problem are as follows.

The first one is to consider the zero distribution of $(f^n)'-p$ where $p$ is a small function of $f$. In fact, this extension was originated from the study of the zero distribution of $ff'-p$, proposed by Yik-Man Chiang in \cite{Ber97}. There do exist some partial results but it remains open. For example, Zhang\cite{Zha94} proved that if the deficiency $\delta(\infty, f)$ of the poles of $f$ is strictly greater than $7/9$, then $ff'-p$ has infinitely many zeros. Bergweiler \cite{Ber97}  gave a positive answer when $p$ is a nonzero polynomial and $f$ is of finite order.
\begin{thm}[Theorem of \cite{Ber97}]\label{thm:ber}
	Let $f$ be a transcendental meromorphic function of finite order and $p$ be a polynomial which does not vanish identically. Then $ff'-p$ has infinitely many zeros.
\end{thm}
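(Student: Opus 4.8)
The plan is to argue by contradiction: assume that $\psi:=ff'-p$ has only finitely many zeros and derive a contradiction with the transcendence of $f$. The first task is to convert the hypothesis into a rigid normal form, using both the finiteness of the order and the pole structure. Since $ff'=\tfrac12(f^{2})'$, at a pole of $f$ of multiplicity $m$ the function $\psi$ has a pole of \emph{odd} multiplicity $2m+1$, while $\psi$ is holomorphic and nonzero at every zero of $f$ that is not a zero of $p$. Writing $f=u/v$ with $u,v$ entire of finite order and without common zeros, $v$ the canonical product over the poles of $f$, one computes $\psi=(uu'v-u^{2}v'-pv^{3})/v^{3}$, and since $u$ does not vanish on the zero set of $v$ the numerator vanishes to order exactly $m-1$ at a zero of $v$ of order $m$. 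Factoring $v=\ell\,v_{1}$, where $\ell$ is the canonical product with a simple zero at each pole of $f$ and $v_{1}$ has a zero of order $m-1$ at each pole of order $m$, and using that a finite-order entire function with finitely many zeros is a polynomial times $e^{\alpha}$ for a polynomial $\alpha$, one is led (with $A$ a polynomial collecting the finitely many zeros of $\psi$) to the identity
\[
 \frac{f'}{f}-\frac{p}{f^{2}}=\frac{A\,e^{\alpha}}{\ell\,u^{2}}
 \qquad\Bigl(\text{equivalently } ff'-p=\frac{A\,e^{\alpha}}{\ell^{3}v_{1}^{2}}\Bigr).
\]
Multiplying out, using $ff'=\tfrac12(f^{2})'$ together with $f^{2}=u^{2}/(\ell^{2}v_{1}^{2})$, and clearing denominators converts this into a polynomial identity between entire functions involving $u,u',\ell,\ell',v_{1}'/v_{1},e^{\alpha}$ and $p$, which I would treat as the algebraic core of the argument.

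To close the argument I would combine this identity with standard Nevanlinna estimates. The function $\psi$ is automatically transcendental (otherwise $f^{2}$ would be a rational antiderivative, forcing $f$ to be rational), and $T(r,\psi)=O(T(r,f))$, so any error term $S(r,\psi)$ is $S(r,f)$; moreover the lemma on the logarithmic derivative keeps $m(r,\cdot)$ at the level $S(r,f)$ for $u'/u$, $\ell'/\ell$, $v_{1}'/v_{1}$, $\alpha'$ and for the logarithmic derivative of $\psi/f^{2}$, because $u,\ell,v_{1}$ have finite order and $\alpha$ is a polynomial. In addition I would exploit the two pieces of rigidity that the identity alone does not record: that $ff'=\tfrac12(f^{2})'$ is a derivative — hence $\psi$ has residue zero at every pole, which pins down one Laurent coefficient of the right-hand side at each pole of $f$ — and that $f^{2}$ is a perfect square, which rigidly links the local multiplicities of $u$, $v_{1}$ and $\ell$. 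Feeding all of this into the second main theorem, with the polynomial $p$ in the role of a small function of $f$, is designed to force $T(r,f)=S(r,f)$, which is absurd. The mechanism is most transparent in the sub-case that $f$ has no poles and only finitely many zeros, i.e. $f=Be^{\beta}$ with $B,\beta$ polynomials and $\beta$ nonconstant: the identity collapses to $(BB'+B^{2}\beta')e^{2\beta}-A\,e^{\alpha}=p$, and Borel's theorem on exponential sums — after dealing by hand with a couple of degenerate possibilities (such as $2\beta-\alpha$ constant) — forces $p\equiv0$, a contradiction.

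The step I expect to be the main obstacle is the handling of the poles of $f$, and more generally of the case in which the zeros or the poles of $f$ accumulate with convergence exponent comparable to $\deg\alpha$. There the Borel-type dichotomy of the previous paragraph breaks down: $u$ and $\ell$ are then transcendental, of essentially the full order of $f$, so it is not a priori clear that they cannot conspire with $e^{\alpha}$ and $p$ in the displayed identity. Ruling this out is exactly where the finite-order hypothesis — which is precisely what forces $\alpha$ to be a \emph{polynomial} rather than an arbitrary entire function — together with the zero-residue and perfect-square constraints must be used in earnest. This is also the structural reason why $ff'$, the case $n=2$ of Hayman's conjecture, needs a treatment separate from the cases $n\geq3$: there an additional power of $f$ damps the contribution of the poles and a single Nevanlinna-type inequality suffices, whereas for $ff'$ the poles enter the normal form only to the first power and must be controlled by hand.
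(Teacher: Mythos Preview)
The paper does not prove this theorem at all: Theorem~\ref{thm:ber} is merely quoted from \cite{Ber97} as background. What the paper \emph{does} prove is strictly stronger---Corollary~\ref{cor:Hay} with $n=1$ gives the same conclusion with the finite-order hypothesis removed---and by an entirely different and much shorter route. Namely, one takes $Q(z)=z^{2}$ in Theorem~\ref{thm:H}, whose proof is a three-line application of the Fang--Wang inequality (Lemma~\ref{pro:FW}): for $g=f^{2}$ and $k=1$,
\[
(1-\epsilon)T(r,f^{2})\le \overline N\!\left(r,\tfrac{1}{f^{2}}\right)+N\!\left(r,\tfrac{1}{(f^{2})'-2p}\right)+S(r,f),
\]
and since $\overline N(r,1/f^{2})=\overline N(r,1/f)\le T(r,f)$ while $T(r,f^{2})=2T(r,f)$, one gets $(1-2\epsilon)T(r,f)\le N(r,1/(2ff'-2p))+S(r,f)$. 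No factorisation, no finite-order assumption, no Borel lemma.

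Your proposal, by contrast, tries to reconstruct a direct argument in Bergweiler's spirit, and there is a genuine gap. The normal form $ff'-p=Ae^{\alpha}/(\ell^{3}v_{1}^{2})$ is set up correctly, and the sub-case where $f$ is entire with finitely many zeros does collapse to a Borel identity as you say. But the main case---$u$ or $\ell$ transcendental---is not actually carried out: you list the ingredients (logarithmic-derivative estimates, the residue constraint, the perfect-square constraint, the second main theorem) and assert they ``are designed to force $T(r,f)=S(r,f)$'', yet no inequality is ever written down that combines them, and you yourself flag this as ``the main obstacle''. In particular, the residue condition and the perfect-square rigidity are local constraints at each pole; it is not at all clear how you intend to convert them into a global Nevanlinna inequality strong enough to kill the $T(r,f)$ term. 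As written this is a plan, not a proof, and the hard step is precisely the one left open.
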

The second question given by Eremenko and Langley \cite{EL08} is whether one can consider a more general differential polynomial of $f$ such as a linear differential polynomial $$F:=f^{(k)}+a_{k-1}f^{(k-1)}+\dots+a_0f$$ with suitable small function coefficients $a_j$ instead of $f^{(k)}$ only.  Very recently, An and Phuong \cite{AP20} investigated this question for a differential polynomial $Q(f)$ with some restrictive conditions.
\begin{thm}[Theorem 1 of \cite{AP20}]\label{thm:AP}
	Let $f$ be a transcendental meromorphic function, and $Q(z)=b(z-a_1)^{m_1}(z-a_2)^{m_2}\cdots(z-a_l)^{m_l}$ be a polynomial of degree $q$, where  $b\in\C^*$ and $a_1, \dots, a_l\in\C$. If $q\geq l+1$, then $(Q(f))^{(k)}$ takes every finite nonzero value infinitely often, for any positive integer $k$.
\end{thm}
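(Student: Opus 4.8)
My approach would be a proof by contradiction. Suppose that for some $k\ge 1$ and some $c\in\C^{*}$ the function $(Q(f))^{(k)}-c$ has only finitely many zeros, and put $g:=Q(f)$. Then $g$ is transcendental meromorphic with $T(r,g)=qT(r,f)+O(1)$, the poles of $g$ are exactly those of $f$, and every zero of $g$ lying over a root $a_{j}$ has multiplicity a multiple of $m_{j}$, so that $N(r,1/g)=\sum_{j}m_{j}N(r,1/(f-a_{j}))$ while $\overline N(r,1/g)=\sum_{j}\overline N(r,1/(f-a_{j}))$. I would then split into a \emph{generic} regime, in which some multiplicity $m_{j}$ is large relative to $k$ (this holds, e.g., when $l=1$ and $q>k$, a case already covered by \cite{BE95}), and a \emph{boundary} regime comprising the remaining finitely many possibilities, including the critical case $q=l+1$.

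For the generic regime I would feed $g$ into a Milloux--Hayman type second fundamental inequality, bounding $T(r,g)$ in terms of $\overline N(r,g)$, $N(r,1/g)$, $N(r,1/(g^{(k)}-c))$, a negative contribution $-N(r,1/g^{(k+1)})$ from the zeros of $g^{(k+1)}$, and $S(r,g)$. Since $N(r,1/(g^{(k)}-c))=S(r,f)$ by hypothesis, and since a zero of $g$ of multiplicity $\mu$ forces a zero of $g^{(k+1)}$ of multiplicity at least $\mu-k-1$ — so that the divisibility of the zero multiplicities of $g$ by the $m_{j}$ makes $N(r,1/g)-N(r,1/g^{(k+1)})$ strictly smaller than $N(r,1/g)$ — this, combined with the second main theorem for $f$ over the targets $a_{1},\dots,a_{l},\infty$, yields an estimate of the form $qT(r,f)\le C(k,l)\,T(r,f)+S(r,f)$, which is absurd whenever the multiplicity pattern is favourable. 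This step is routine bookkeeping in Nevanlinna theory.

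For the boundary regime I would adapt the rescaling (normal family) arguments used to settle the small cases of Hayman's problem (cf.\ \cite{BE95,CF95,PZ99}). After a translation we may assume $(Q(f))^{(k)}(w)\ne c$ for all $|w|>R_{0}$. Because $f$ is transcendental, the family $\{f(tz):t>2R_{0}\}$ is not normal on the annulus $\{1<|z|<2\}$, so Zalcman's lemma supplies $t_{n}\to\infty$, $z_{n}\to z_{*}$, $\rho_{n}\downarrow 0$ and an exponent $\alpha$ for which $G_{n}(\zeta):=\rho_{n}^{-\alpha}f(t_{n}z_{n}+t_{n}\rho_{n}\zeta)$ converges, locally uniformly in the spherical metric, to a non-constant meromorphic function $G$ on $\C$ of order at most $2$ with $G^{\#}(\zeta)\le G^{\#}(0)=1$. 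Choosing $\alpha$ so that the leading monomial $bz^{q}$ of $Q$ governs the rescaled equation, one checks that the property $(Q(f))^{(k)}\ne c$ survives passage to the limit, forcing $(Q(G))^{(k)}$ either to omit a value on $\C$ or to be identically constant. In the first case $G$ has finite order, and a Nevanlinna argument as above — now with the equality cases controllable — is contradictory.

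The degenerate case is the delicate one. Then $Q(G)$ is a polynomial, hence $G$ has no poles and is itself a polynomial, and the hypothesis $q\ge l+1$ — which is precisely the statement that $Q$ has a repeated root — must be exploited together with the normalization $G^{\#}(0)=1$. For $k=1$ this is immediate, since $(Q(G))'=c$ gives $Q(G)=cz+d$, which would force $q\deg G=1$, impossible when $q\ge l+1\ge 2$. For $k\ge 2$, however, the naive criterion that $(Q(F))^{(k)}\ne c$ implies normality of $\{F\}$ is already false (witness $F_{n}(z)=nz$ with $Q(z)=z^{2}$), polynomial limits $G$ of degree $k/q$ survive, and ruling these out is the principal obstacle — exactly where the deepest instances of Hayman's problem are concentrated. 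I would expect to close this case with a further rescaling, or with a global count of how often such exceptional configurations can occur along $f$, bounded against the second main theorem.
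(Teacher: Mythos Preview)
Your proposal has a genuine gap, and you locate it yourself: in the boundary regime, once the Zalcman limit $G$ is a polynomial with $(Q(G))^{(k)}\equiv c$, you must exclude polynomial limits of degree $k/q$ for $k\ge 2$, yet you note that the naive normality criterion already fails (your example $F_{n}(z)=nz$, $Q(z)=z^{2}$, $k=2$) and offer only the hope that ``a further rescaling, or a global count'' might close it. This is not a residual technicality. The classical Milloux--Hayman inequality you invoke in the generic regime carries $N(r,1/g)$ on the right, and after cancelling the $g^{(k+1)}$ contribution you are still left with a term of size at most $(k+1)\overline N(r,1/g)\le (k+1)l\,T(r,f)$; together with $\overline N(r,g)\le T(r,f)$ this only contradicts $T(r,g)=q\,T(r,f)+S(r,f)$ when $q$ exceeds something of order $(k+1)l+1$. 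Hence the critical range $q=l+1$ --- the actual content of the theorem --- lies entirely inside your unfinished boundary regime.

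The paper (through its proof of the more general Theorem~\ref{thm:H}, which specialises to the present statement) bypasses all of this by a single sharper inequality of Fang and Wang (Lemma~\ref{pro:FW}): for transcendental meromorphic $g$ and nonzero polynomial $p$,
\[
(1-\epsilon)T(r,g)+(k-1)\overline N(r,g)\ \le\ \overline N\Bigl(r,\frac{1}{g}\Bigr)+N\Bigl(r,\frac{1}{g^{(k)}-p}\Bigr)+S(r,g).
\]
The decisive feature is $\overline N(r,1/g)$, not $N(r,1/g)$, on the right. With $g=Q(f)$ one has $\overline N(r,1/g)\le \sum_{j=1}^{l}\overline N(r,1/(f-a_{j}))\le l\,T(r,f)+O(1)$ and $T(r,g)=q\,T(r,f)+O(1)$, so the inequality gives $(q-l-\epsilon)T(r,f)\le N\bigl(r,1/((Q(f))^{(k)}-c)\bigr)+S(r,f)$ in one line. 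No case split, no normal families, no rescaling; the ingredient missing from your approach is precisely this replacement of $N$ by $\overline N$ in the second-main-theorem-type estimate for $g^{(k)}$.
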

Inspired by Theorems \ref{thm:ber} and \ref{thm:AP}, we generalize An and Phuong's result. In fact, we consider more general situations that the coefficients of the polynomial $Q$ are allowed to be functions meromoprhic in $\C$, and the nonzero value is replaced by a polynomial. Our main result is the following.

\begin{thm}\label{thm:H}
	Let $f$ be a transcendental meromorphic function, $p$ be a nonzero polynomial and $Q(z)=b(z-a_1)^{m_1}(z-a_2)^{m_2}\cdots(z-a_l)^{m_l}$ be a polynomial of degree $q$, where  $b\not\equiv 0, a_1, \dots, a_l$ are small functions of $f$. If $q\geq l+1$, then $(Q(f))^{(k)}-p$ has infinitely many zeros, and particularly $(Q(f))^{(k)}$ has infinitely many fixed points, for any positive integer $k$.
\end{thm}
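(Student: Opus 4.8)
The plan is to argue by contradiction, assuming that $(Q(f))^{(k)}-p$ has only finitely many zeros, and to extract enough from the Nevanlinna machinery to force $f$ into a contradiction with transcendence. First I would set $g:=Q(f)$ and observe that, since $Q$ has degree $q$ with coefficients small relative to $f$, we have $T(r,g)=qT(r,f)+S(r,f)$, so smallness with respect to $g$ and to $f$ coincide, and $g$ is still transcendental. The key structural input is the factored form $Q(z)=b\prod_{j=1}^{l}(z-a_j)^{m_j}$: a pole of $g$ occurs only at a pole of $f$ or a pole of some $a_j$ or $b$, and at a generic pole of $f$ of multiplicity $s$ the function $g$ has a pole of multiplicity $qs$, hence $g^{(k)}$ has a pole of multiplicity $qs+k$; meanwhile at a zero of $f-a_j$ the function $g$ vanishes to order at least $m_j$, so $g^{(k)}$ vanishes there to order at least $m_j-k$ when $m_j>k$ (and carries no forced zero otherwise). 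This is exactly where the hypothesis $q\ge l+1$ will be used: it guarantees $\sum_j(m_j-1)=q-l\ge 1$, i.e. $Q$ has a ``genuine'' multiple structure, which is what produces enough ramification in $g^{(k)}$ to win a second-main-theorem estimate.

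The main step is a second-fundamental-theorem argument applied to the meromorphic function $h:=g^{(k)}/p = (Q(f))^{(k)}/p$, using the three values $0,1,\infty$. By assumption $h-1=((Q(f))^{(k)}-p)/p$ has $N(r,h=1)=S(r,f)$ (only finitely many zeros, plus the zeros of $p$ which are $O(\log r)$). For the value $\infty$, the counting function $\overline N(r,h)$ is controlled by poles of $f$, but the truncated defect is substantial: a pole of $f$ of multiplicity $s$ contributes $1$ to $\overline N(r,h)$ but $qs+k$ to $N(r,h)$, so a large chunk of the pole divisor is ``ramification'' not seen by $\overline N$. For the value $0$, the zeros of $h$ come from the zeros of $f-a_j$ (up to $S(r,f)$ terms), and again truncation loses the orders $m_j-k$. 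Plugging these into $T(r,h)\le \overline N(r,h)+\overline N(r,1/h)+\overline N(r,1/(h-1))+S(r,h)$ and comparing with $T(r,h)$ computed from the pole side — i.e. bounding $T(r,h)$ below by roughly $(qs+k)$-type contributions while the truncated right-hand side only sees $\overline N$ — I expect to reach a contradiction of the form $qT(r,f)\le(\text{something strictly smaller})\,T(r,f)+S(r,f)$. The role of Bergweiler's Theorem \ref{thm:ber}, or rather the finite-order sub-case it settles, is likely to be invoked as a stepping stone or as motivation; more plausibly one mimics the Bergweiler/An--Phuong technique of writing $p = (Q(f))^{(k)} - ((Q(f))^{(k)}-p)$ and estimating $T(r,p)=O(\log r)$ against a Wronskian-type or logarithmic-derivative lower bound on the right.

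Concretely, the cleanest route is probably the \emph{logarithmic-derivative / auxiliary-function} device: from $(Q(f))^{(k)}=p+\psi$ with $\psi$ having finitely many zeros, one forms an explicit rational-in-derivatives identity by differentiating the relation $Q(f)=$ (a $k$-fold primitive of $p+\psi$, modulo polynomials) and then applies the lemma on the logarithmic derivative to $Q'(f)f'/Q(f)$ and to $\psi'/\psi$. Since $1/Q(f)$ has, at a zero of $f-a_j$, a pole of order exactly $m_j$, one gets $m(r,1/Q(f))$ controlled, and combined with $N(r,1/Q(f))$ coming from the $\sum_j \overline N(r,1/(f-a_j))$ one recovers $(q-l)$-worth of Nevanlinna mass that must be cancelled elsewhere, forcing $T(r,f)=S(r,f)$, impossible.

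The step I expect to be the main obstacle is the bookkeeping at poles: tracking exactly how multiplicity is created by the $k$ derivatives and the degree-$q$ exponent simultaneously, and ensuring that the ``lost'' truncated mass is strictly positive \emph{uniformly} in the pole multiplicities $s$ (the worst case being simple poles, $s=1$, where the gap between $qs+k$ and $1$ is smallest but still equals $q+k-1\ge q$). One must also handle the exceptional zeros/poles coming from the small functions $a_j,b,p$ and their derivatives, absorbing them into $S(r,f)$, and check that $q\ge l+1$ — not merely $q\ge l$ — is exactly the threshold that makes the final inequality strict rather than borderline. Everything else is the standard Nevanlinna-theoretic routine once these two accounting points are pinned down.
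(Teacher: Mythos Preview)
Your second-main-theorem plan for $h=g^{(k)}/p$ has a genuine gap at the $0$-value term. You assert that ``the zeros of $h$ come from the zeros of $f-a_j$ (up to $S(r,f)$ terms)'', but this is false: zeros of $g^{(k)}$ need not be zeros of $g$. A $k$-th derivative can acquire arbitrarily many new zeros unrelated to the zeros of the original function, so $\overline{N}(r,1/h)=\overline{N}(r,1/g^{(k)})+O(\log r)$ is not bounded by $\sum_j\overline{N}(r,1/(f-a_j))$. With that term uncontrolled, the SMT inequality
\[
T(r,h)\le \overline{N}(r,h)+\overline{N}(r,1/h)+\overline{N}(r,1/(h-1))+S(r,h)
\]
yields nothing: the large unknown $\overline{N}(r,1/g^{(k)})$ on the right can absorb whatever lower bound you place on $T(r,h)$, and no contradiction follows. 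Your fallback ``logarithmic-derivative / auxiliary-function'' sketch is too vague to rescue this; in particular, writing $Q(f)$ as a $k$-fold primitive of $p+\psi$ introduces $k$ unknown polynomial constants of integration and does not produce a usable identity.

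The paper sidesteps exactly this obstacle by invoking a ready-made inequality of Fang and Wang (Proposition~3 of \cite{FW13}, stated here as Lemma~\ref{pro:FW}): for transcendental $g$, positive integer $k$, nonzero polynomial $p$, and any $\epsilon>0$,
\[
(1-\epsilon)T(r,g)+(k-1)\overline{N}(r,g)\le \overline{N}\Bigl(r,\tfrac{1}{g}\Bigr)+N\Bigl(r,\tfrac{1}{g^{(k)}-p}\Bigr)+S(r,g).
\]
The crucial feature is that the right-hand side contains $\overline{N}(r,1/g)$, \emph{not} $\overline{N}(r,1/g^{(k)})$; the zeros of $g=Q(f)$ genuinely are the zeros of the $f-a_j$ (and of $b$), so $\overline{N}(r,1/g)\le lT(r,f)+S(r,f)$. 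Combined with $T(r,Q(f))=qT(r,f)+S(r,f)$ this gives
\[
(q-l-\epsilon)T(r,f)\le N\Bigl(r,\tfrac{1}{(Q(f))^{(k)}-p}\Bigr)+S(r,f),
\]
and $q\ge l+1$ finishes it in one line. So the missing idea in your proposal is precisely this Fang--Wang estimate, whose proof (a Milloux-type refinement) is where the real work of tracking $g^{(k)}$ against $g$ rather than against its own zeros is done.
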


As a consequence of Theorem \ref{thm:H}, we give an affirmative answer to the question of Chiang when $p$ is a polynomial, without growth restriction on $f$.

\begin{coro}[Hayman's problem for polynomials]\label{cor:Hay} If $f$ is a transcendental meromorphic function and $p$ is a nonzero polynomial, then $f^nf'-p$ has infinitely many zeros, for any positive integer $n$.
\end{coro}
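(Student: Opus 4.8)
The plan is to deduce Corollary~\ref{cor:Hay} directly from Theorem~\ref{thm:H} by choosing the polynomial $Q$ appropriately. Given a positive integer $n$, set $Q(z)=\frac{1}{n+1}z^{n+1}$, so that $Q(f)=\frac{1}{n+1}f^{n+1}$ and hence $(Q(f))'=f^nf'$. In the notation of Theorem~\ref{thm:H} we have $b=\frac{1}{n+1}$ (a nonzero constant, hence a small function of $f$), a single zero $a_1=0$ of multiplicity $m_1=n+1$, so $l=1$ and the degree is $q=n+1$. The hypothesis $q\geq l+1$ reads $n+1\geq 2$, which holds for every positive integer $n$.

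With these choices, Theorem~\ref{thm:H} applies with $k=1$: for any transcendental meromorphic $f$ and any nonzero polynomial $p$, the differential polynomial $(Q(f))^{(1)}-p = f^nf' - p$ has infinitely many zeros. This is exactly the assertion of Corollary~\ref{cor:Hay}, so the proof is complete.

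There is essentially no obstacle here: the corollary is a specialization, and the only points to check are the bookkeeping items (that constants count as small functions, and that the degree inequality $q\geq l+1$ is satisfied by the single-root polynomial $z^{n+1}$), both of which are immediate. If one wished to phrase the result in the fixed-point form, one would instead invoke the second conclusion of Theorem~\ref{thm:H} with $p(z)=z$, obtaining that $f^nf'$ has infinitely many fixed points, though the stated corollary already covers this by taking $p(z)=z$ in the first conclusion.
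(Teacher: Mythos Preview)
Your proof is correct and essentially identical to the paper's: the authors also note that the corollary follows immediately from Theorem~\ref{thm:H} by taking $k=1$ and $Q(z)=z^{n+1}$. Your normalization $Q(z)=\frac{1}{n+1}z^{n+1}$ makes $(Q(f))'=f^nf'$ on the nose, whereas the paper's choice gives $(n+1)f^nf'-p$, but this is an immaterial difference since $p$ ranges over all nonzero polynomials.
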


The corollary follows immediately if one takes $k=1$, and $Q(z)=z^{n+1}$. Moreover, when $Q(z)=z^n$, $n\geq 2$,  one can extend a result of Hayman in (\cite{Hay59}, Theorem 2). Note that Hayman considered the value distribution of $(f^n)^{(k)}-c$ for any nonzero complex number $c$, while we can take $c$ to be a polynomial.
\begin{coro}
	Let $f$ be a transcendental meromorphic function and $p$ be a nonzero polynomial. Then for $n\geq 2, k\in\mathbb{N}$,  $(f^n)^{(k)}-p$ has infinitely many zeros.
\end{coro}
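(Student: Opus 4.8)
The plan is to derive this corollary as an immediate specialization of Theorem~\ref{thm:H}. First I would take $Q(z)=z^n$ and observe that it already has precisely the shape required in Theorem~\ref{thm:H}: writing $Q(z)=b(z-a_1)^{m_1}$ with $b\equiv 1$, $l=1$, $a_1\equiv 0$ and $m_1=n$, the polynomial $Q$ has degree $q=n$, its leading coefficient $b\equiv 1$ is not identically zero, and its ``coefficients'' (the constants $1$ and $0$) trivially lie in the field of small functions of $f$. Thus all the structural hypotheses of Theorem~\ref{thm:H} are met.

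Next I would verify the degree hypothesis. The condition $q\geq l+1$ reads $n\geq 2$, which is exactly the standing assumption of the corollary. Hence Theorem~\ref{thm:H} applies verbatim with this choice of $Q$ and yields that $(Q(f))^{(k)}-p=(f^n)^{(k)}-p$ has infinitely many zeros for every positive integer $k$; in particular $(f^n)^{(k)}$ has infinitely many fixed points. This completes the deduction.

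I do not expect a genuine obstacle at this stage: all the analytic content — the estimates on counting functions via the second main theorem, the control of the poles of $f$, and the handling of the small coefficients and of the polynomial $p$ — is packaged inside Theorem~\ref{thm:H}. The only point deserving attention is the strictness $n\geq 2$: for $n=1$ the inequality $q\geq l+1$ fails, so the borderline object $ff^{(k)}-p$ is not covered by this argument; when $k=1$ that case is instead supplied by Corollary~\ref{cor:Hay} (or by Theorem~\ref{thm:ber} under a finite-order assumption). Finally, for the comparison with Hayman's Theorem~2 in \cite{Hay59}, one simply observes that specializing $p$ to a nonzero constant recovers his conclusion, so that the novelty here is exactly the admissibility of an arbitrary nonzero polynomial $p$, which is built into Theorem~\ref{thm:H}.
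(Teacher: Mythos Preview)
Your proposal is correct and is exactly the paper's argument: the paper deduces this corollary by taking $Q(z)=z^n$ with $b\equiv 1$, $l=1$, $a_1\equiv 0$, so that the hypothesis $q\geq l+1$ of Theorem~\ref{thm:H} becomes $n\geq 2$ and the conclusion follows at once. (A minor slip in your side remark: for $n=1$ the borderline object is $f^{(k)}-p$, not $ff^{(k)}-p$; this does not affect the proof.)
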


As a consequence, we also obtain the following result, which could be regarded as a precursor of Theorem \ref{thm:gh}.

\begin{coro}
	Let $f$ be a transcendental meromorphic function and $P$ be a nonzero polynomial, then $Pf^n-f'$ has infinitely many zeros for any $n\geq 3$.
\end{coro}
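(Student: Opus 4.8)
The plan is to deduce this corollary directly from Theorem~\ref{thm:H} by choosing the polynomial $Q$ and the derivative order $k$ appropriately. Specifically, I would set $k=1$ and try $Q(z)=\tfrac{1}{n+1}z^{n+1}$, so that $Q(f)'=f^nf'$; then for a given nonzero polynomial $P$, the equation $Pf^n-f'=0$ does not literally match $(Q(f))'-p=0$. To bridge this gap, I would instead look for a $Q$ and a small function $a$ with $(Q(f))'=Pf^n-f'+(\text{something controllable})$, or, more cleanly, divide through: the zeros of $Pf^n-f'$ are (away from zeros of $P$ and poles of $f$) the same as the zeros of $f^n-f'/P$. This suggests setting up an auxiliary function and applying Theorem~\ref{thm:H} with $p$ chosen so that the resulting differential polynomial has the right form.

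More concretely, I expect the cleanest route is the following. Write $g = 1/f$; then $f' = -g'/g^2$ and $Pf^n - f' = P g^{-n} + g'/g^2$. Multiplying by $g^{n}$ (for $n\ge 2$ this kills the pole from $g^{-n}$) gives $P + g^{n-2}g'$, and the zeros of $Pf^n-f'$ outside the zero/pole sets of the auxiliary factors correspond to zeros of $g^{n-2}g' + P$, i.e. to zeros of $\bigl(\tfrac{1}{n-1}g^{n-1}\bigr)' + P$. Now apply Theorem~\ref{thm:H} to the transcendental meromorphic function $g$ with $k=1$, $Q(z)=\tfrac{1}{n-1}z^{n-1}$ (so $q=n-1$, $l=1$, and the hypothesis $q\ge l+1$ becomes $n-1\ge 2$, i.e. $n\ge 3$, which is exactly the stated range), and $p=-P$: Theorem~\ref{thm:H} yields infinitely many zeros of $(Q(g))'+P$. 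Finally I would translate these zeros back to zeros of $Pf^n-f'$, checking that only finitely many are lost to the finitely many zeros of $P$ and that the correspondence with zeros/poles of $f$ is accounted for using Nevanlinna first-main-theorem-type counting, so infinitely many genuine zeros of $Pf^n-f'$ remain.

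The main obstacle I anticipate is purely bookkeeping rather than conceptual: making the substitution $g=1/f$ rigorous so that zeros of the transformed differential polynomial really do produce zeros of $Pf^n-f'$ and not spurious solutions, and ensuring the exceptional sets (zeros of $P$, common zeros/poles, points where the algebraic manipulation degenerates) are finite or at worst of finite counting function, hence negligible against the ``infinitely many'' conclusion. One should also confirm that $g=1/f$ is again transcendental meromorphic (immediate) and that $\tfrac{1}{n-1}z^{n-1}$ indeed has coefficients that are small functions of $g$ (they are constants, so trivially small). If the direct substitution proves awkward, an alternative is to apply Theorem~\ref{thm:H} to $f$ itself with a cleverly chosen $Q$ of degree $n$ having a repeated linear factor, arranging $q=n$, $l=1$ so that $q\ge l+1$ reduces to $n\ge 2$; but since the corollary only claims $n\ge 3$, the $g=1/f$ approach with $Q(z)=\tfrac{1}{n-1}z^{n-1}$ aligns the hypotheses most naturally and is the route I would present.
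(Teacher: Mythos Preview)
Your approach is correct and essentially identical to the paper's: both set $g=1/f$, rewrite $Pf^n-f'=\dfrac{P+g^{n-2}g'}{g^{n}}$, and conclude that the infinitely many zeros of $g^{n-2}g'+P$ produce infinitely many zeros of $Pf^n-f'$. The only cosmetic difference is that the paper cites Corollary~\ref{cor:Hay} (applied to $g$ with exponent $n-2\ge 1$) rather than invoking Theorem~\ref{thm:H} directly as you do, and the paper omits the bookkeeping about exceptional points that you spell out.
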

\begin{proof}
	Let $f=1/g$, then
	$$Pf^n-f'=P/g^n+g'/g^2=\frac{P+g'g^{n-2}}{g^n}.$$
	Hence the result follows from Corollary \ref{cor:Hay}.
\end{proof}

In view of the above results, we only consider the polynomial case, so what if that of small functions? This question is in general not easy to answer. However, using some ideas from Liao and Ye \cite{LY14}, the classical Logarithmic Derivative Lemma and the Clunie Lemma, we give some partial results as follows.

To describe our result, we need to introduce some classes of meromorphic functions.

Let $T(r, f)$ be the Nevanlinna characteristic function of $f$.
We denote by $S(r, f)$ any quantity  which is of growth $o(T(r, f))$ as $r\rightarrow\infty$ outside a set $E\subset(0, \infty)$ of finite measure. A meromorphic function $y$ is called a \emph{small function} of $f$ if it satisfies that $T(r, y)=S(r, f)$. The family of small functions of $f$ is defined by  $\mathcal{S}_f$.
By $\mathcal{N}_0$ and $\mathcal{S}_0$ we mean that the family of meromorphic functions $y$ with finitely many poles and $\mathcal{S}_0=\mathcal{S}_f\cap \mathcal{N}_0$, respectively. Clearly, the field $\C(z)$ of rational functions and the ring of entire functions are contained in $\mathcal{N}_0$.
\begin{thm}\label{thm:gh}
	Let $$P(z, w)=\sum_{j=m}^nb_j(z)w^j, \quad b_m\neq0$$ be a polynomial in $w$ with coefficients $b_j(z)$ in the family $\mathcal{S}_0$, and $$L(z, w)=\sum_{|I|=0}^ka_I(z)w^{i_0}w'^{i_1}\cdots (w^{(q)})^{i_q}$$ be a differential polynomial in $w$ over $\mathcal{S}_0$, where $I=(i_0, \dots, i_q)$ is a multi-index with length $|I|=i_0+\cdots+i_q$. If $f$ is a transcendental meromorphic function in $\mathcal{N}_0$ such that $L(z, f)\not\equiv 0$,  then $P(z, f)+L(z, f)$ has infinitely many zeros for any $m\geq k+2$.
\end{thm}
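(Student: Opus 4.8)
The plan is to argue by contradiction: assume that $g := P(z, f) + L(z, f)$ has only finitely many zeros. Since $f \in \mathcal{N}_0$ has finitely many poles and all coefficients $b_j$, $a_I$ lie in $\mathcal{S}_0$ (hence also have finitely many poles), the function $g$ has finitely many poles as well, so $g$ omits $0$ and $\infty$ up to a rational factor; writing $g = R e^{h}$ with $R$ rational and $h$ entire, or more robustly working with $N(r, 1/g) = S(r, f)$ and $N(r, g) = S(r, f)$, we get $m(r, g'/g) = S(r, f)$ and more generally $T(r, g'/g) = S(r,f)$ by the Logarithmic Derivative Lemma. The idea, following Liao--Ye \cite{LY14}, is to extract from the equation $P(z, f) = g - L(z, f)$ a Clunie-type bound showing that $f$ itself behaves like a small function times something controlled, forcing $m(r, f) = S(r, f)$ and eventually a contradiction with transcendence.

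The key steps, in order, are as follows. First I would differentiate or otherwise manipulate $P(z,f) + L(z,f) = g$ so as to eliminate the unknown $g$: a natural device is to consider $g' - \frac{g'}{g} g = 0$, i.e. to use that $g'/g$ is small, and substitute to obtain a new differential-polynomial identity in $f$ alone whose right-hand side is in $\mathcal{S}_f$. Concretely, from $g = P(z,f)+L(z,f)$ one gets
\begin{equation*}
\left(P(z,f)+L(z,f)\right)' = \frac{g'}{g}\left(P(z,f)+L(z,f)\right),
\end{equation*}
and since $g'/g$ is a small function, the left side minus the right side is a differential polynomial $\Phi(z,f) \equiv 0$ in $f$ over $\mathcal{S}_f$. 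Second, I would analyze the structure of $\Phi$: the highest-degree term in $f$ coming from $P'$ and from $(g'/g)P$ is of degree $n$; the terms coming from $L'$ and $(g'/g)L$ have degree at most $1$ in $f$ (since $L$ is linear in $w$), but involve derivatives up to order $q+1$. The lowest-degree term is governed by $b_m w^m$, whose derivative contributes $m b_m w^{m-1} w' + b_m' w^m$, degree $m-1$ or $m$. Third — and this is where the hypothesis $m \ge k+2$ must enter, together with $L$ being a genuine differential polynomial of order $\le k$ — I would apply the Clunie Lemma to a rearranged form $f^{s} \cdot A(z,f) = B(z,f)$ where $A$ has degree $\le \deg B$ appropriately in $f$, to conclude $m(r, f^{s} A) = S(r,f)$, and from the lowest-order behavior deduce $m(r, f) = S(r, f)$. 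Combined with $N(r,f) = S(r,f)$ this gives $T(r,f) = S(r,f)$, contradicting that $f$ is transcendental.

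The main obstacle I expect is the bookkeeping in the second and third steps: one must verify that after forming $\Phi(z, f) \equiv 0$ the Clunie Lemma genuinely applies, i.e. that the "weight" (degree) of the part multiplying the high power of $f$ does not exceed that of the remainder. This is exactly the role of the numerical hypothesis $m \ge k+2$: the slack of $2$ presumably accounts for the one extra differentiation (raising order $k$ to $k+1$) and for the gap needed between $m$ and the degree‑$1$ contributions of $L$, so that $f^{m-1}$ (or a suitable power) can be isolated against terms of strictly smaller degree in $f$. A secondary subtlety is that $b_m \not\equiv 0$ but other $b_j$ or the leading behavior could conspire to cancel; handling the case $L(z,f) \equiv 0$ is excluded by hypothesis, but one still must ensure the differential polynomial $\Phi$ is not identically zero as a formal object — if it were, that would itself force $g$ to satisfy $g'/g = (P+L)'/(P+L)$ trivially, which is automatic, so in fact $\Phi \equiv 0$ is the \emph{identity} we exploit, not a degeneracy; the real content is extracting the Clunie estimate from it. Once $m(r,f) = S(r,f)$ is established the conclusion is immediate.
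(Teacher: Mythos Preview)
Your overall strategy matches the paper's: assume finitely many zeros, write $P(f)+L(f)=Ae^{h}$ with $A$ rational and $h$ entire, differentiate, eliminate $e^{h}$, and invoke Clunie. But two details are wrong, and one is fatal. First, you have misread $L$: it is \emph{not} linear. The multi-index $I=(i_0,\dots,i_q)$ records exponents, $|I|=i_0+\cdots+i_q$ is the total degree of the monomial $w^{i_0}(w')^{i_1}\cdots(w^{(q)})^{i_q}$, and the constraint $|I|\le k$ bounds the total degree of $L$ by $k$; the differential order is $q$. So $k$ is a degree bound, not an order bound, and the terms coming from $L'$ and $(g'/g)L$ have degree up to $k$, not $1$. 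Second, and more seriously, your endgame does not work. After rearranging to $f^{m-1}H(z,f)=Q(z,f)$ with $\deg Q\le k\le m-2$, Clunie gives $m(r,H)=S(r,f)$---\emph{not} $m(r,f^{s}A)=S(r,f)$ as you wrote, and there is no route from this to $m(r,f)=S(r,f)$.

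The paper's actual endgame is different. Since $f\in\mathcal{N}_0$ and all coefficients lie in $\mathcal{S}_0$, one also has $N(r,H)=S(r,f)$, whence $T(r,H)=S(r,f)$, so $H$ is a genuine small function of $f$ and $T(r,Q)=T(r,f^{m-1}H)=(m-1)T(r,f)+S(r,f)$ by Lemma~\ref{lem:FMT}. Independently, writing each monomial of $Q$ as a power of $f$ times a polynomial in $f'/f$ and its derivatives, the Logarithmic Derivative Lemma gives $T(r,Q)\le k\,m(r,f)+S(r,f)\le k\,T(r,f)+S(r,f)$. Comparing the two estimates yields $(m-1)T(r,f)\le k\,T(r,f)+S(r,f)$, contradicting $m\ge k+2$. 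You also need to dispose of the degeneracy $H\equiv 0$ separately, which is not automatic: in that case $Q\equiv 0$ forces $L(f)$ to be a constant multiple of $Ae^{h}$, and substituting back gives $f^{m}\sum_{j\ge m} b_j f^{j-m}=(K-1)L(f)$ with $K\ne 1$; a second application of Clunie then shows $\sum_{j\ge m} b_j f^{j-m}$ is small, contradicting transcendence of $f$.
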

In the special case that $P(z, w)=Q_1(z)w^n$ and $L(z, w)=Q_2(z)w^{(q)}$ where $Q_1(z)$ and $Q_2(z)$ are nonzero rational functions, $n, q\in\mathbb{N}$, we recover some known results for entire functions in \cite{Hay59}.
\begin{coro}
	Let $f$ be a transcendental entire function and let $R$ be a rational function. If $n\geq 3$, then $Q_1f^n+Q_2f^{(q)}-R$ has infinitely many zeros, and hence $Q_1f^n-Q_2f^{(q)}$ has infinitely many fixed points for all $q\in\mathbb{N}$.
\end{coro}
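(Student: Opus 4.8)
The plan is to obtain this corollary as an immediate specialization of Theorem \ref{thm:gh}. I would set $P(z,w)=Q_1(z)w^n$, so that $m=n$ in the notation of Theorem \ref{thm:gh}, and collect the remaining terms into the differential polynomial $L(z,w)=Q_2(z)\,w^{(q)}-R(z)$. Here $L$ has only two nonzero multi-index terms: the term $Q_2(z)w^{(q)}$, whose multi-index $(0,\dots,0,1)$ has length $1$, and the constant term $-R(z)$ of length $0$; hence $L$ is a legitimate differential polynomial over $\mathcal{S}_0$ with $k=1$. With these choices $P(z,f)+L(z,f)=Q_1f^n+Q_2f^{(q)}-R$, so it suffices to check the hypotheses of Theorem \ref{thm:gh}.

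Next I would verify those hypotheses. Since $f$ is entire it has no poles, so $f\in\mathcal{N}_0$, and $f$ is transcendental. The coefficients $Q_1,Q_2,R$ are rational functions, hence have finitely many poles and satisfy $T(r,\cdot)=O(\log r)=S(r,f)$ because $T(r,f)/\log r\to\infty$ for transcendental $f$; thus $Q_1,Q_2,R\in\mathcal{S}_0$, and $b_m=Q_1\not\equiv0$. It remains to see that $L(z,f)=Q_2 f^{(q)}-R\not\equiv0$: otherwise $f^{(q)}=R/Q_2$ would be a rational function which, being also entire (a derivative of the entire function $f$), must be a polynomial, forcing $f$ itself to be a polynomial, contradicting transcendence of $f$. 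Finally the degree condition of Theorem \ref{thm:gh} reads $m\ge k+2$, i.e. $n\ge 3$, which is exactly our assumption. Therefore Theorem \ref{thm:gh} yields that $Q_1f^n+Q_2f^{(q)}-R$ has infinitely many zeros.

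For the fixed-point statement I would simply rerun the argument with $Q_2$ replaced by $-Q_2$ and $R$ replaced by the polynomial $z$; both are still nonzero rational functions, so the previous paragraph gives that $Q_1f^n-Q_2f^{(q)}-z$ has infinitely many zeros, that is, $Q_1f^n-Q_2f^{(q)}$ has infinitely many fixed points. The case $q=0$ (if $0\in\mathbb{N}$) is covered in the same way, since $Q_2w$ is still an admissible length-$1$ term of $L$.

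I do not expect any genuine obstacle here: the corollary is a direct substitution into Theorem \ref{thm:gh}, and the only point needing a word is the non-vanishing $L(z,f)\not\equiv0$, handled above. All of the real work, namely the Clunie lemma and the logarithmic derivative estimates, is already absorbed into the proof of Theorem \ref{thm:gh}.
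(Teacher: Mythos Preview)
Your proposal is correct and is exactly the specialization the paper has in mind: the paper states the corollary immediately after Theorem \ref{thm:gh} as the special case $P(z,w)=Q_1(z)w^n$, $L(z,w)=Q_2(z)w^{(q)}$ (with the constant term $-R$ absorbed into $L$), and gives no separate proof. Your verification of the side conditions, in particular $L(z,f)\not\equiv 0$ and the reading $k=1$ so that $n\ge 3$ matches $m\ge k+2$, is precisely what is needed.
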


 Our methods also provide a unified way to study Hayman's problem of the zero distribution of a partial differential polynomial of meromorphic functions of several complex variables with small meromorphic coefficients.

Let $I=(i_1, \dots, i_N)\in\mathbb{N}^{N}$ be an integer multi-index. The partial derivative of a meromorphic function $f$ in $(z_1, \dots, z_N)$ is defined by $$D^If=\frac{\partial^{|I|}f}{\partial z^I}=\frac{\partial^{|I|}f}{\partial z_1^{i_1}\cdots\partial z_N^{i_N}},\ |I|=\sum_{j=1}^Ni_j.$$

\begin{thm}\label{thm:ghs}Let $I_{\mu}=(i_{1\mu}, \dots, i_{N\mu})\in\mathbb{N}^{N}$ be a multi-index for $\mu=1,\cdots, s$.
	Let $$P(z, w)=\sum_{j=m}^nb_j(z)w^j, \quad b_m\neq0$$ be a polynomial in $w=w(z_1, \dots, z_N)$ with meromorphic coefficients $b_j(z)$, and $$L(z, w)=\sum_{\mathbf{l}\in\mathcal{L}}a_{\mathbf{l}}(z)w^{l_0}(D^{I_1}w)^{l_1}\cdots(D^{I_s}w)^{l_s}, \quad \mathbf{l}=(l_0,\dots, l_s)\in\mathbb{N}^{s+1}$$ be a partial differential polynomial in $w$ with total degree at most $k$, where $\mathcal{L}$ is a finite set of distinct elements and $b_j(z)$'s, $a_{\mathbf{l}}(z)$'s are meromorphic functions on $\C^N$ with an algebraic pole set. Let $f$ be a transcendental meromorphic function on $\C^N$ with an algebraic pole set such that $L(z, f)\not\equiv 0$. If $a_{\mathbf{l}}(z)$'s are small functions of $f$,  then the zero set of $P(z, f)+L(z, f)$ must be transcendental for any $m\geq k+2$. \end{thm}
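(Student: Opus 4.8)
The strategy is to reduce the several‑variable statement to the one‑variable Theorem \ref{thm:gh} by a generic slicing (restriction‑to‑a‑line) argument, combined with the observation that partial derivatives restrict to ordinary derivatives along a direction after a linear change of coordinates. First I would fix a generic point $z^0\in\C^N$ and a generic direction $v\in\C^N$ and consider the entire/meromorphic function of one variable $g(\zeta):=f(z^0+\zeta v)$. For suitable generic choices of $z^0$ and $v$, the restriction $g$ is transcendental (this uses that $f$ is transcendental on $\C^N$; a nonconstant slice exists, and a dimension‑count shows that the set of directions giving a rational slice is a proper subvariety, since $f$ itself would otherwise be rational), and the algebraic pole set of $f$ meets the line $z^0+\C v$ in a set which is either all of the line — excluded for generic choices — or a discrete set, so $g\in\mathcal{N}_0$. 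Likewise each small coefficient $a_{\mathbf{l}}$, being small relative to $f$, restricts to a small function of $g$; here one invokes the standard fact (e.g. via the Crofton/slicing formulas for the several‑variable Nevanlinna characteristic) that $T(r,a_{\mathbf{l}}\!\mid_{\text{line}})$ is controlled by $T(r,a_{\mathbf{l}})$ for almost every line, so that $T(r,a_{\mathbf l}\!\mid_{\text{line}})=S(r,g)$.

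Next I would express $D^{I_\mu}f$ restricted to the line in terms of derivatives of $g$. After an invertible linear change of variables sending $v$ to $e_1$, the line becomes $\{(\zeta,0,\dots,0)\}$ and $D^{I_\mu}f$ becomes a linear combination (with constant coefficients coming from the linear transformation) of mixed partials; restricting to the line kills every term involving $\partial_{z_2},\dots,\partial_{z_N}$, leaving a constant multiple of $g^{(|I_\mu|)}$ plus contributions of strictly lower order that are themselves obtained by differentiating lower‑order slices. In fact it is cleanest to argue that $P(z,f)+L(z,f)$ restricted to the line has the shape $P^\ast(\zeta,g)+L^\ast(\zeta,g)$, where $P^\ast(\zeta,w)=\sum_{j=m}^n b_j^\ast(\zeta)w^j$ with $b_m^\ast\not\equiv0$ (for a generic line the restriction of $b_m$ is not identically zero), and $L^\ast$ is an ordinary differential polynomial in $g$ over $\mathcal{S}_0$ of total degree at most $k$. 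The condition $L(z,f)\not\equiv0$ on $\C^N$ guarantees $L^\ast(\zeta,g)\not\equiv0$ for a generic line. Then Theorem \ref{thm:gh} applies to $P^\ast(\zeta,g)+L^\ast(\zeta,g)$ and yields infinitely many zeros $\zeta_\nu$ along the line; lifting back, $z^0+\zeta_\nu v$ are infinitely many zeros of $P(z,f)+L(z,f)$, and since they lie on a complex line they cannot be confined to any algebraic hypersurface of dimension $<N-1$ intersected — more carefully, infinitely many points on a line force the zero set to be infinite, hence (being analytic) of positive dimension, and one concludes it is not an algebraic variety; i.e. it is transcendental in the sense of the theorem.

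The main obstacle, and the step deserving the most care, is the genericity bookkeeping: one must choose the line so that \emph{simultaneously} (i) $g$ is transcendental, (ii) $g\in\mathcal{N}_0$, (iii) the restriction of $b_m$ is $\not\equiv0$, (iv) $L^\ast(\zeta,g)\not\equiv0$, and (v) the restrictions of all $a_{\mathbf l}$ remain small functions of $g$. Each of (i)–(iv) excludes only a proper analytic (indeed algebraic) subset of the parameter space of lines, so their union is still a proper subset and a good line exists; (v) requires the slicing inequality for Nevanlinna characteristics, which holds for all lines outside a set of measure zero. A secondary subtlety is that ``the zero set is transcendental'' must be interpreted correctly — I would phrase the conclusion as: the zero set of $P(z,f)+L(z,f)$ is not contained in any algebraic hypersurface of $\C^N$, and deduce this by noting that if it were contained in $\{R=0\}$ for a polynomial $R$, then the generic line (chosen also to not lie in $\{R=0\}$) would meet $\{R=0\}$ in finitely many points, contradicting the infinitude of zeros of $P^\ast(\zeta,g)+L^\ast(\zeta,g)$ produced by Theorem \ref{thm:gh}. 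Once the slicing dictionary is set up rigorously, the proof is a short deduction.
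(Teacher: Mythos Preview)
Your slicing argument contains a genuine gap at the point where you claim that restriction to a line turns $L(z,f)$ into an ordinary differential polynomial $L^\ast(\zeta,g)$ in $g=f|_{\text{line}}$. The assertion that ``restricting to the line kills every term involving $\partial_{z_2},\dots,\partial_{z_N}$'' is false: restriction means evaluation at fixed values of the transverse coordinates, and this does \emph{not} annihilate transverse partial derivatives. Concretely, after your linear change sending $v$ to $e_1$, the function $\zeta\mapsto (\partial_{z_2}\tilde f)(w^0+\zeta e_1)$ is in general a nontrivial transcendental function that is not determined by $g(\zeta)=\tilde f(w^0+\zeta e_1)$ and its $\zeta$-derivatives. (For instance, with $N=2$ and $\tilde f(w_1,w_2)=w_2 e^{w_1}$, the restriction of $\partial_{w_2}\tilde f$ to $\{w_2=0\}$ is $e^{w_1}$, while $g\equiv 0$.) Thus $L(z,f)|_{\text{line}}$ involves the restrictions of several independent partial derivatives of $f$, and cannot be written as a differential polynomial in $g$ alone; Theorem \ref{thm:gh} is therefore not applicable to the sliced data. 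A secondary issue is your step (v): smallness of the $a_{\mathbf l}$ relative to $f$ does not, for a fixed line, automatically yield smallness of $a_{\mathbf l}|_{\text{line}}$ relative to $g$, since the slicing formulas give control only in an averaged sense and you would need simultaneous lower bounds on $T(r,g)$ and upper bounds on $T(r,a_{\mathbf l}|_{\text{line}})$ along one and the same line.

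The paper does not slice. It instead repeats the proof of Theorem \ref{thm:gh} verbatim in $\C^N$, replacing the one-variable tools by their several-variable analogues: the logarithmic derivative lemma of Vitter/Li and the several-variable Clunie lemma. With those in hand, every step of the argument for Theorem \ref{thm:gh} (the factorization $P(f)+L(f)=Ae^h$ when the zero set is algebraic, differentiation, elimination of $e^h$, the Clunie estimate on $H(z,f)$, and the final characteristic comparison) carries over directly, so the authors simply note this and omit the details. If you wish to salvage a reduction-to-one-variable approach, it would have to be for the very special case in which every $D^{I_\mu}$ is a power of a single directional derivative; for the general $L$ in the statement, the intrinsic several-variable machinery is needed.
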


Here, a set is said to be {\it algebraic} if it is contained in the zero set of a polynomial, otherwise, it is said to be {\it transcendental}.

\vskip.2in

\noindent\textbf{1.2\ Organization of the paper.} The rest of the paper is organized as follows. In Sect. 2 we state several results
that will be used in our proofs. Then we prove our main result (Theorems \ref{thm:H}, \ref{thm:gh} and \ref{thm:ghs}) in Sect. 3.

\section{Some Lemmas}
We first recall some useful lemmas.
\begin{lemm}[Theorem 2.2.5 of \cite{IIpo11}, FMT for small functions]\label{lem:FMT}
	Let $f$ be a meromorphic function in $\mathbb{C}$ and $a\in\mathcal{S}_f$, then $$T(r, a, f)=T(r, f)+S(r, f)$$ and $$T(r, af)=T(r, f)+S(r, f), \quad a\neq 0.$$
\end{lemm}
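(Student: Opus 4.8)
The plan is to derive both identities purely from the elementary algebraic properties of the Nevanlinna characteristic, combined with the defining relation $T(r,a)=S(r,f)$ for a small function $a\in\mathcal{S}_f$. The two standard inputs are the classical constant-coefficient First Main Theorem, used in the form of reciprocal invariance $T(r,1/g)=T(r,g)+O(1)$, and the sub-additivity estimates $T(r,g_1g_2)\le T(r,g_1)+T(r,g_2)$ and $T(r,g_1+g_2)\le T(r,g_1)+T(r,g_2)+O(1)$. Every error term that appears will ultimately be controlled by $T(r,a)$, which is $S(r,f)$ by hypothesis, so the whole argument is a matter of organizing these bounds into the desired two-sided estimates.

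For the first identity I read $T(r,a,f)$ as the characteristic attached to the moving value $a$, namely $T(r,a,f)=T\!\left(r,\tfrac{1}{f-a}\right)$. First I would apply reciprocal invariance to reduce this to $T(r,f-a)+O(1)$. Then I would sandwich $T(r,f-a)$ around $T(r,f)$ up to $T(r,a)$: on one side $T(r,f-a)\le T(r,f)+T(r,a)+O(1)$, and on the other, writing $f=(f-a)+a$, one gets $T(r,f)\le T(r,f-a)+T(r,a)+O(1)$. Since $T(r,a)=S(r,f)$, both inequalities collapse to $T(r,f-a)=T(r,f)+S(r,f)$, and hence $T(r,a,f)=T(r,f)+S(r,f)$.

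For the second identity I would argue symmetrically, using the factorisation $af$ and its inverse. The upper bound is immediate: $T(r,af)\le T(r,a)+T(r,f)=T(r,f)+S(r,f)$. For the lower bound I would write $f=\tfrac{1}{a}\cdot(af)$, which is legitimate because $a\not\equiv0$, and apply sub-additivity to obtain $T(r,f)\le T(r,1/a)+T(r,af)$; here $T(r,1/a)=T(r,a)+O(1)=S(r,f)$ again by reciprocal invariance, so $T(r,f)\le T(r,af)+S(r,f)$. Combining the two bounds yields $T(r,af)=T(r,f)+S(r,f)$.

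The only genuine care needed, and what I regard as the main though modest obstacle, is the bookkeeping of the remainder terms: one must verify that every $O(1)$ and every occurrence of $T(r,a)$ is absorbed into a single $S(r,f)$, and that the exceptional set $E\subset(0,\infty)$ of finite measure implicit in the notation $S(r,f)$ is handled correctly. This causes no real difficulty, since finitely many such sets may be unioned and still have finite measure. No tool deeper than the constant-coefficient First Main Theorem is required; indeed, the content of the lemma is precisely the assertion that, up to $S(r,f)$, a small function behaves like a constant in these two formulas.
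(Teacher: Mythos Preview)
Your argument is correct and is the standard elementary derivation of this fact. Note, however, that the paper does not supply its own proof of this lemma: it is quoted as Theorem~2.2.5 of Laine's textbook and used as a black box, so there is no proof in the paper to compare against. Your reading of $T(r,a,f)$ as $T\!\left(r,\tfrac{1}{f-a}\right)$ is the intended one, and the two sandwich arguments via sub-additivity and reciprocal invariance are exactly how one proves it; the bookkeeping of $O(1)$ terms and exceptional sets that you flag is routine and handled correctly.
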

In 2013, Fang and Wang proved the following proposition, which plays the crucial role in the proof of Theorem \ref{thm:H}.
\begin{lemm}[Proposition 3 of \cite{FW13}]\label{pro:FW}
	Let $g$ be a transcendental meromorphic function in $\mathbb{C}$, $k$ be a positive integer, and $p(\not\equiv 0)$ be a polynomial. Then for any $\epsilon>0$,
	\begin{equation}\label{eqn:FW}
		(1-\epsilon)T(r, g)+(k-1)\overline{N}(r, g)\leq\overline{N}(r, \frac{1}{g})+N(r, \frac{1}{g^{(k)}-p})+S(r, f).
	\end{equation}	
\end{lemm}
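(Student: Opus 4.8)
The plan is to convert the asserted inequality, via the first main theorem and the logarithmic derivative lemma, into a single estimate for the value distribution of $g^{(k)}$ at the three targets $0,\infty,p$, and then to apply a sufficiently sharp second main theorem there; the surplus term $(k-1)\overline{N}(r,g)$ should come entirely from the fact that every pole of $g^{(k)}$ has multiplicity at least $k+1$.

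First I would rewrite both sides. Since $p$ is a polynomial, $T(r,p)=S(r,g)$, so by the first main theorem
\[
N\!\left(r,\frac{1}{g^{(k)}-p}\right)=T(r,g^{(k)})-m\!\left(r,\frac{1}{g^{(k)}-p}\right)+S(r,g),
\]
and the conclusion becomes $(1-\epsilon)T(r,g)+(k-1)\overline{N}(r,g)+m\!\left(r,\frac{1}{g^{(k)}-p}\right)\le \overline{N}(r,1/g)+T(r,g^{(k)})+S(r,g)$. On the other side, writing $\tfrac1g=\tfrac{g^{(k)}}{g}\cdot\tfrac{1}{g^{(k)}}$ and using $m(r,g^{(k)}/g)=S(r,g)$ gives $m(r,1/g)\le m(r,1/g^{(k)})+S(r,g)$, whence, applying the first main theorem to $g$ and to $g^{(k)}$,
\[
T(r,g)+N(r,1/g^{(k)})\le T(r,g^{(k)})+N(r,1/g)+S(r,g).
\]
Together with $T(r,g^{(k)})=m(r,g^{(k)})+N(r,g)+k\overline{N}(r,g)$ and $m(r,g^{(k)})\le m(r,g)+S(r,g)$, these identities leave only the value distribution of $g^{(k)}$ at $0,\infty,p$ to be analysed.

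For that I would invoke the second main theorem for $g^{(k)}$ with the (small) targets $0,\infty,p$. The classical form already yields the Milloux‑type bound $T(r,g)\le \overline{N}(r,g)+N(r,1/g)+\overline{N}\!\left(r,\frac{1}{g^{(k)}-p}\right)+S(r,g)$, but this is too weak twice over: it keeps $+\overline{N}(r,g)$ on the right, whereas the lemma wants $-(k-1)\overline{N}(r,g)$, and it carries $N(r,1/g)$ rather than $\overline{N}(r,1/g)$. To repair both I would use a sharp form of the second main theorem for small meromorphic functions — the truncation‑one version carrying an arbitrarily small $\epsilon T$–term — in the shape $(1-\epsilon)T(r,g^{(k)})\le \overline{N}(r,g^{(k)})+\overline{N}(r,1/g^{(k)})+\overline{N}\!\left(r,\frac{1}{g^{(k)}-p}\right)+S(r,g)$, and then exploit the pole structure: a pole of $g$ of multiplicity $\nu$ is a pole of $g^{(k)}$ of multiplicity $\nu+k$, so $\overline{N}(r,g^{(k)})=\overline{N}(r,g)$ while $N(r,g^{(k)})=N(r,g)+k\overline{N}(r,g)$. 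Equivalently, $\infty$ is a value of $g^{(k)}$ all of whose preimages have multiplicity $\ge k+1$; counting it with the corresponding reduced weight is exactly what converts the $+\overline{N}(r,g)$ into the required $-(k-1)\overline{N}(r,g)$. Collecting the terms from the two reductions above, and routing the multiple‑point defects $N(r,1/g)-\overline{N}(r,1/g)$ and $N\!\left(r,\frac{1}{g^{(k)}-p}\right)-\overline{N}\!\left(r,\frac{1}{g^{(k)}-p}\right)$ to the favourable side (the lemma asks only for $\overline{N}(r,1/g)$ on the right but allows the full $N\!\left(r,\frac{1}{g^{(k)}-p}\right)$), whatever does not cancel exactly is absorbed into $\epsilon T(r,g)$.

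I expect the main obstacle to be precisely this last, sharp bookkeeping: arranging the constants so that the coefficient of $\overline{N}(r,g)$ comes out as $k-1$ and not $k$, so that truncation‑one $\overline{N}(r,1/g)$ suffices in place of $N(r,1/g)$, and so that only an arbitrarily small multiple of $T(r,g)$ is lost. None of this survives with the classical second main theorem alone — one genuinely needs its sharp, truncated form together with a careful analysis of the ramification divisor of $g^{(k)}$ (in particular the zeros of $g^{(k)}$ coming from high‑order zeros of $g$). A secondary, routine point is that $p$ is a polynomial rather than a constant, so the logarithmic derivative lemma, the first main theorem and the second main theorem must be used in their small‑function / moving‑target versions, with the usual care about the exceptional set $E\subset(0,\infty)$ of finite measure.
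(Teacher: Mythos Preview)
The paper does not prove this lemma: it is quoted verbatim as Proposition~3 of Fang--Wang \cite{FW13} and used as a black box in the proof of Theorem~\ref{thm:H}. So there is no ``paper's own proof'' to compare your proposal against.

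That said, your proposal is a strategy rather than a proof, and you already identify its weak point honestly. The reductions in your first two displayed lines are correct and standard (logarithmic derivative lemma plus first main theorem applied to $g$ and $g^{(k)}$). The difficulty is that applying the truncated second main theorem to $g^{(k)}$ with targets $0,\infty,p$ gives you control of $T(r,g^{(k)})$, and you then need \emph{two} different lower bounds for $T(r,g^{(k)})$ simultaneously: the pole bound $T(r,g^{(k)})\ge N(r,g)+k\overline{N}(r,g)$ to manufacture the $(k-1)\overline{N}(r,g)$ term, and your zero-side bound $T(r,g^{(k)})\ge T(r,g)-N(r,1/g)+N(r,1/g^{(k)})+S(r,g)$ to manufacture the $T(r,g)$ term. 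These do not add; you only get to use the second main theorem once, and neither lower bound alone suffices. In particular, no amount of $\epsilon$-absorption rescues the $\overline{N}(r,1/g)$ versus $N(r,1/g)$ discrepancy when $g$ has many multiple zeros of order $\le k$, and the ramification divisor of $g^{(k)}$ at $0$ does not cancel it in general. So as written this approach has a genuine gap.

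The argument in \cite{FW13} does not go through a second main theorem applied to $g^{(k)}$. It proceeds instead via an auxiliary logarithmic-derivative quantity built from $g$ and $g^{(k)}-p$ (in the spirit of Hayman's and Milloux's original inequalities), whose proximity function is $S(r,g)$ by the logarithmic derivative lemma and whose poles are accounted for pointwise in terms of the zeros of $g$, the poles of $g$, and the zeros of $g^{(k)}-p$; the $(k-1)\overline{N}(r,g)$ term and the truncation to $\overline{N}(r,1/g)$ fall out of that local pole count, with the $\epsilon$-loss coming from the step that compares $T(r,g)$ to the characteristic of the auxiliary function. If you want to reconstruct the proof, that is the route to follow.
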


\begin{lemm}[Theorem 2.3.3 of \cite{IIpo11}]\label{lem:LDL}
	Let $f$ be a transcendental meromorphic function and $k\geq 1$ be an integer. Then $$m\left(r, \frac{f^{(k)}}{f}\right)=S(r, f).$$
\end{lemm}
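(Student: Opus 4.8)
The plan is to prove the statement first for $k=1$ by the classical Poisson--Jensen argument and then to bootstrap to arbitrary $k$ by a telescoping factorization; throughout I will use the fact, verified along the way, that the derivatives $f^{(j)}$ have characteristic comparable to that of $f$, so that error terms of the form $S(r, f^{(j)})$ collapse to $S(r, f)$. For the base case $m(r, f'/f) = S(r, f)$, I would fix $0 < r < R$ and apply the Poisson--Jensen formula to $\log|f|$ on $|z| < R$, expressing it through the boundary integral of $\log|f(Re^{i\phi})|$ together with the Blaschke-type sums over the zeros $a_\mu$ and poles $b_\nu$ of $f$ in $|z| < R$. Differentiating this identity in $z$ yields the representation
\[
\frac{f'(z)}{f(z)} = \frac{1}{2\pi}\int_0^{2\pi}\log|f(Re^{i\phi})|\,\frac{2Re^{i\phi}}{(Re^{i\phi}-z)^2}\,d\phi + \sum_\mu\Bigl(\frac{1}{z-a_\mu}+\frac{\bar a_\mu}{R^2-\bar a_\mu z}\Bigr) - \sum_\nu\Bigl(\frac{1}{z-b_\nu}+\frac{\bar b_\nu}{R^2-\bar b_\nu z}\Bigr).
\]
Taking $\log^+$, integrating over $|z|=r$, and using $\log^+(\sum|\cdot|) \leq \sum \log^+|\cdot| + \log(\text{count})$, I would bound the boundary term by a multiple of $\log^+ T(R, f) + \log^+\frac{1}{R-r}$, and control the pole/zero sums via $\int_0^{2\pi}\log^+\frac{1}{|re^{i\theta}-a|}\,d\theta = O(1)$ per term together with $n(R, f)+n(R, 1/f) = O(T(R,f))$. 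Collecting these gives an inequality of the shape
\[
m(r, f'/f) \leq C\Bigl(\log^+ T(R, f) + \log^+ R + \log^+\tfrac{1}{R-r} + 1\Bigr), \quad 0<r<R.
\]

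Next I would convert this two-radius estimate into a genuine $S(r, f)$ bound. Choosing $R = r + 1/T(r, f)$ and invoking the Borel growth lemma --- if $T$ is continuous, nondecreasing with $T(r)\geq 1$ then $T(r + 1/T(r)) \leq 2T(r)$ for all $r$ outside a set of finite measure --- replaces $\log^+ T(R, f)$ by $\log^+ T(r, f)$ and turns $\log^+\frac{1}{R-r}$ into $\log^+ T(r, f)$. The right-hand side then becomes $O(\log^+ T(r, f) + \log r)$ off an exceptional set of finite measure, which is exactly $S(r, f)$, settling $k=1$.

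For the inductive step I would write the telescoping product $\frac{f^{(k)}}{f} = \prod_{j=1}^{k}\frac{f^{(j)}}{f^{(j-1)}}$, so that $m(r, f^{(k)}/f) \leq \sum_{j=1}^{k} m\bigl(r, (f^{(j-1)})'/f^{(j-1)}\bigr)$. Each summand is the logarithmic derivative of $g_j := f^{(j-1)}$, hence equals $S(r, g_j)$ by the base case. To see $S(r, g_j) = S(r, f)$, I would record the standard growth bound $T(r, f') \leq 2T(r, f) + S(r, f)$, which follows from $m(r, f') \leq m(r, f) + m(r, f'/f)$ and $N(r, f') \leq 2N(r, f)$; iterating yields $T(r, f^{(j)}) = O(T(r, f))$, so every $\log^+ T(r, g_j)$ is absorbed into $\log^+ T(r, f)$. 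Since only finitely many functions $g_1, \dots, g_k$ occur, the union of their exceptional sets is again of finite measure, and the conclusion $m(r, f^{(k)}/f) = S(r, f)$ follows.

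The main obstacle is concentrated entirely in the base case: extracting the clean bound from the differentiated Poisson--Jensen formula requires careful control of the kernels $(Re^{i\phi}-z)^{-2}$ near the boundary and of the averaged singular sums $\int \log^+|re^{i\theta}-a|^{-1}\,d\theta$, while the passage to $S(r, f)$ depends on the Borel-type lemma to discard the unavoidable exceptional set of finite measure. By contrast, the telescoping and growth-comparison steps are routine.
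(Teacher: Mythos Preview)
The paper does not supply a proof of this lemma; it is quoted from Laine's textbook \cite{IIpo11} and used as a black box. Your outline---differentiated Poisson--Jensen formula, estimates on the boundary kernel and the zero/pole sums, Borel's growth lemma to eliminate the auxiliary radius, then the telescoping product $\frac{f^{(k)}}{f}=\prod_{j=1}^{k}\frac{f^{(j)}}{f^{(j-1)}}$ together with $T(r,f^{(j)})=O(T(r,f))$ for higher $k$---is precisely the classical argument found in the cited reference and in Hayman's and Nevanlinna's original treatments, and it is correct. One minor imprecision worth flagging: the claim $n(R,f)+n(R,1/f)=O(T(R,f))$ is not literally true, since the unintegrated counting function is controlled only by the characteristic at a somewhat larger radius (e.g.\ $n(R,\cdot)\le N(eR,\cdot)\le T(eR,f)+O(1)$); the textbook proofs absorb this either by introducing a third radius $r<R<\rho$ or by a second application of the Borel lemma, which is routine.
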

\begin{lemm}[Lemma 3.3 of \cite{Hay64}, Clunie's lemma]\label{lem:Cl}
	Let $f$ be a transcendental meromorphic function in the complex plane such that $$f^nP(z, f)=Q(z,f),$$ where $P(z,f)$ and $Q(z,f)$ are polynomials in $f$ and its derivatives with meromorphic coefficients, say $\{a_{\lambda}|\lambda\in\Lambda\}$, such that $T(r, a_{\lambda})=S(r, f)$ for all $\lambda\in\Lambda$. If the total degree of $Q(z, f)$ as a polynomial in $f$ and its derivative is at most $n$, then
	$$m(r, P(z,f))=S(r, f)\quad \mathrm{as}\quad r\rightarrow\infty.$$
	
\end{lemm}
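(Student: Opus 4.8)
The plan is to estimate the proximity function $m(r,P(z,f))$ directly, splitting the circle $\{|z|=r\}$ according to the size of $|f|$. Put $E_1=\{\theta\in[0,2\pi):|f(re^{i\theta})|<1\}$ and $E_2=[0,2\pi)\setminus E_1$, so that
$$m(r,P(z,f))=\frac{1}{2\pi}\int_{E_1}\log^+|P(re^{i\theta},f)|\,d\theta+\frac{1}{2\pi}\int_{E_2}\log^+|P(re^{i\theta},f)|\,d\theta,$$
and I would bound each piece separately by $S(r,f)$.

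On $E_1$ one does not use the functional equation at all. A monomial of $P$ has the shape $a_\lambda f^{j_0}(f')^{j_1}\cdots(f^{(q)})^{j_q}$; I would rewrite it as $a_\lambda f^{\,j_0+j_1+\cdots+j_q}\prod_{i\ge1}(f^{(i)}/f)^{j_i}$. Since $|f|<1$ on $E_1$, the factor $|f|^{\,j_0+\cdots+j_q}$ is $\le1$ whenever the monomial has positive degree, whereas a degree-zero term contributes only $\log^+|a_\lambda|$. Using subadditivity of $\log^+$ over the finitely many monomials together with $\log^+|xy|\le\log^+|x|+\log^+|y|$, one obtains, on $E_1$,
$$\log^+|P(re^{i\theta},f)|\le\sum_\lambda\log^+|a_\lambda|+\sum_\lambda\sum_{i\ge1}j_i\log^+\!\Big|\frac{f^{(i)}}{f}\Big|+O(1).$$
Integrating and applying the Logarithmic Derivative Lemma (Lemma~\ref{lem:LDL}) together with $m(r,a_\lambda)\le T(r,a_\lambda)=S(r,f)$ shows the $E_1$-integral is $S(r,f)$.

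On $E_2$ I would invoke $P(z,f)=Q(z,f)/f^{\,n}$. A monomial of $Q$ is $b_\mu f^{l_0}(f')^{l_1}\cdots(f^{(q)})^{l_q}$ with $l_0+l_1+\cdots+l_q\le n$ by hypothesis, hence
$$\frac{|b_\mu|\,|f|^{l_0}|f'|^{l_1}\cdots}{|f|^{\,n}}=|b_\mu|\,|f|^{\,(l_0+\cdots+l_q)-n}\prod_{i\ge1}\Big|\frac{f^{(i)}}{f}\Big|^{l_i}\le|b_\mu|\prod_{i\ge1}\Big|\frac{f^{(i)}}{f}\Big|^{l_i},$$
since $|f|\ge1$ on $E_2$ and the exponent $(l_0+\cdots+l_q)-n\le0$. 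Taking $\log^+$, summing over the finitely many monomials, integrating over $E_2$, and again applying Lemma~\ref{lem:LDL} bounds the $E_2$-integral by $S(r,f)$. Adding the two estimates gives $m(r,P(z,f))=S(r,f)$.

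The argument carries no real analytic difficulty; the only care needed is bookkeeping. One must keep the degree-zero terms of $P$ separate on $E_1$ (they gain nothing from $|f|<1$ but are controlled by $m(r,a_\lambda)$), and one must use the hypothesis $\deg Q\le n$ precisely, so that the net exponent of $f$ produced on $E_2$ is nonpositive. Since the numbers of monomials and of coefficients are finite, passing from the pointwise inequalities to the integrated ones costs only an additional $S(r,f)$. Nothing beyond the Logarithmic Derivative Lemma and the smallness of the coefficients is required.
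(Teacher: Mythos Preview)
Your argument is the classical proof of Clunie's lemma and is correct as written; the split into $E_1=\{|f|<1\}$ and $E_2=\{|f|\ge1\}$, the rewriting of each monomial via logarithmic derivatives, and the use of $\deg Q\le n$ on $E_2$ are exactly the standard steps. Note, however, that the paper does not supply its own proof of this lemma: it is quoted as Lemma~3.3 of \cite{Hay64} and used as a black box, so there is no ``paper's proof'' to compare against---what you have reproduced is essentially the textbook argument from that reference.
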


\section{Proof of Theorems \ref{thm:H}, \ref{thm:gh} and \ref{thm:ghs}}
\begin{proof}[Proof of Theorem \ref{thm:H}]
	Applying Lemma \ref{pro:FW} to $g=Q(f)$, we have
	\begin{eqnarray*}
		&\ &(1-\epsilon)T(r, Q(f))+(k-1)\overline{N}(r, Q(f))\\
		&\leq &\overline{N}\left(r, \frac{1}{Q(f)}\right)+N\left(r, \frac{1}{Q(f)^{(k)}-p}\right)+S(r, f)\\
		&\leq&\sum_{i=1}^l\overline{N}\left(r, \frac{1}{f-a_i}\right)+\overline{N}\left(r, \frac{1}{b}\right)+N\left(r, \frac{1}{Q(f)^{(k)}-p}\right)+S(r, f)\\
		&\leq&lT(r, f)+N\left(r, \frac{1}{Q(f)^{(k)}-p}\right)+S(r, f),
	\end{eqnarray*}
	and then from the FMT for small functions (Lemma \ref{lem:FMT}), it follows that  $$(q-l-\epsilon)T(r, f)\leq N\left(r, \frac{1}{Q(f)^{(k)}-p}\right)+S(r, f)$$ for any $\epsilon>0$. Therefore, $Q(f)^{(k)}=p$ has infinitely many roots when $q\geq l+1$, which means that $Q(f)^{(k)}-p$ has infinitely many zeros.
\end{proof}

\begin{proof}[Proof of Theorem \ref{thm:gh}]
	Let $P(f)=P(z, f)$ and $L(f)=L(z, f)$. Suppose that $P(f)+L(f)$ takes zero finitely many. As $f$ has finitely many poles and the coefficients of $P$ and $L$ are in $\mathcal{S}_0$, it follows that
	\begin{equation}\label{eqn:d}
		\sum_{j=m}^nb_jf^j+L(f)=Ae^h
	\end{equation} where $A$ is a nonzero rational function and $h$ is an entire function with $T(r, h)=S(r, f)$.
	By differentiating both sides of (\ref{eqn:d}), we obtain that
	\begin{equation}\label{eqn:d1}
		\sum_{j=m}^nB_jf^{j-1}+L(f)'=A^*e^h
	\end{equation} where $B_j=b_j'f+jb_jf'$ is a linear differential polynomial of $f$ over $\mathcal{S}_0$ and $A^*=A'+Ah'$ with $T(r, A^*)=S(r, f)$.
	It follows from (\ref{eqn:d}) and (\ref{eqn:d1}) that
	\begin{equation}\label{eqn:cd}
		f^{m-1}H(z, f)=Q(z, f)
	\end{equation}
	where $$H(z, f)=\sum_{j=m}^n(AB_jf^{j-m}-A^*b_jf^{j-m+1})$$ is a differential polynomial in $f$ with coefficients in $\mathcal{S}_f$ and
	\begin{align}\label{eqn:Q}
		Q(z, f)&=A^2\left(\frac{-L(f)}{A}\right)'+Ah'L(f)
	\end{align} is also an $\mathcal{S}_f$--differential polynomial with total degree at most $k(<m-1)$.
	We claim that $H(z, f)\not\equiv 0$. Otherwise, in view of (\ref{eqn:cd}), (\ref{eqn:Q}) and $L(f)\not\equiv0$, one has $Q(z, f)\equiv 0$, and then $KL(f)=Ae^h$ with some constant $K$. Since $f$ is a transcendental meromorphic function and $b_j\in\mathcal{S}_0$, (\ref{eqn:d}) gives that $K\neq 1$ and
	$$f^m\left(\sum_{j=m}^nb_jf^{j-m}\right)=(K-1)L(f)$$ with $\deg L=k< m$.
	By Clunie's lemma (Lemma \ref{lem:Cl}), we have
	$$m\left(r, \sum_{j=m}^nb_jf^{j-m}\right)=S(r, f)$$
	Thus, by $N(r, f)=O(\log r)$ and $b_j\in\mathcal{S}_0$, we have $$T\left(r, \sum_{j=m}^nb_jf^{j-m}\right)=S(r, f)$$ yielding a contradiction. Hence $H(z, f)\not\equiv 0$.
	
	Applying Clunie's lemma (Lemma \ref{lem:Cl}) to (\ref{eqn:cd}), we have
	$$m(r, H(z, f))=S(r, f).$$
	As $A^*, A\in\mathcal{S}_f$, $B_j$ is differential polynomials of $f$ with coefficients in $\mathcal{S}_0$ and $b_j\in\mathcal{S}_0$, for $j=m, \dots, n$, it is not hard to see that
	$$N(r, H(z, f))=S(r, f)\quad\mathrm{and\ hence}\quad T(r, H(z, f))=S(r, f).$$
	Therefore, by the FMT of small functions and $T(r, H(z, f))=S(r, f)$,  we have
	$$T(r, Q(z, f))=T(r, H(z, f)f^{m-1})=(m-1)T(r, f)+S(r, f).$$
	From the form of $Q(z, f)$, one can rewrite it as follows
	$$Q(z, f)=\sum_{l=0}^kC_l(z)f(z)^l$$ where $C_l(z)$'s are differential polynomials in $f'/f$ and its derivatives with meromorphic coefficients in $\mathcal{S}_f$, and hence by the LDL, $m(r, C_l)=S(r, f)$ for all $l$ .
	Since
	\begin{align*}
		m\left(r, \sum_{l=0}^kC_lf^l\right)&\leq m\left(r, f\sum_{l=1}^kC_lf^{l-1}\right)+m(r, C_0)+O(1)\\
		&\leq m(r, f)+m\left(r, \sum_{l=1}^kC_lf^{l-1}\right)+m(r, C_0)+O(1),
	\end{align*}
	an immediate inductive argument implies that
	$$m\left(r, \sum_{l=0}^kC_lf^l\right)\leq km(r, f)+\sum_{l=0}^km(r, C_l)+S(r, f)=km(r, f)+S(r, f).$$
	From the form of $Q(z, f)$ in (\ref{eqn:Q}) and $N(r, f)=O(\log r)$, it follows that $$N(r, Q(z, f))=S(r, f),$$ thus one can obtain that $$T(r, Q(z, f))=m(r, Q(z, f))+N(r, Q(z, f))\leq km(r, f)+S(r, f)$$
	and hence $$(m-1)T(r, f)+S(r, f)\leq km(r, f)+S(r, f)\leq kT(r, f)+S(r, f),$$ which is impossible, as $m\geq k+2$.
\end{proof}

\begin{proof}[Proof of Theorem \ref{thm:ghs}]
Together with the $\C^N$-versions of logarithmic derivative lemma (see \cite{Li11,Vit77}) and Clunie's lemma \cite{Li96}, our methods still work. We only need to make slight modifications for the several complex variables cases. We omit the details of the proof.

\end{proof}

\vskip.1in









\end{document}